\documentclass[english]{amsart}
\usepackage[T1]{fontenc}
\usepackage[latin9]{inputenc}
\usepackage{amsthm}
\usepackage{amssymb}

\makeatletter
\numberwithin{equation}{section}
\numberwithin{figure}{section}
\theoremstyle{plain}
\newtheorem{thm}{\protect\theoremname}
\theoremstyle{remark}
\newtheorem{rem}[thm]{\protect\remarkname}
\theoremstyle{plain}
\newtheorem{cor}[thm]{\protect\corollaryname}
\theoremstyle{plain}
\newtheorem{lem}[thm]{\protect\lemmaname}

\makeatother

\usepackage{babel}
\providecommand{\corollaryname}{Corollary}
\providecommand{\lemmaname}{Lemma}
\providecommand{\remarkname}{Remark}
\providecommand{\theoremname}{Theorem}

\begin{document}
\title{On Zeros of Certain Entire Functions}
\author{Ruiming Zhang}
\email{ruimingzhang@guet.edu.cn}
\address{School of Mathematics and Computing Sciences\\
Guilin University of Electronic Technology\\
Guilin, Guangxi 541004, P. R. China. }
\subjclass[2000]{Primary 30C15; 44A10. Secondary 33C10;11M26.}
\keywords{Laplace transforms; Complete monotonic functions; Riemann hypothesis. }
\thanks{This work is supported by the National Natural Science Foundation
of China, grant No. 11771355.}
\begin{abstract}
In this work we derive a sufficient condition to ensure certain genus
0 entire function that can have only negative zeros. We also apply
this result to the Riemann hypothesis and generalized Riemann hypothesis
for some primitive Dirichlet character. 
\end{abstract}

\maketitle

\section{\label{sec:Intro} Introduction }

In this work we study the zero distributions of certain genus $0$
entire functions. Specifically we prove the following result:
\begin{thm}
\label{thm:1} Let $f(z)$ be an entire function with $f(0)\neq0$
such that
\begin{equation}
\frac{f(z)}{f(0)}=\prod_{n=1}^{\infty}\left(1+\frac{z}{\lambda_{n}}\right)=\sum_{n=0}^{\infty}a_{n}z^{n},\label{eq:1.1}
\end{equation}
 where 
\begin{equation}
\sum_{n=1}^{\infty}\frac{1}{\left|\lambda_{n}\right|}<\infty,\quad a_{0}=1,\ \left\{ a_{n}\right\} _{n=1}^{\infty}\subset(0,\infty).\label{eq:1.2}
\end{equation}
If there exist positive numbers $\alpha_{0},\beta_{0}$ with $\alpha_{0},\beta_{0}\in(0,1)$
such that 
\begin{equation}
\sum_{n=1}^{\infty}\frac{1}{\left|\lambda_{n}\right|^{\alpha_{0}}}<\infty\label{eq:1.3}
\end{equation}
and $\forall n\in\mathbb{N}$,
\begin{equation}
\Re(\lambda_{n})\ge\beta_{0}\left|\lambda_{n}\right|>0,\label{eq:1.4}
\end{equation}
then all the zeros of $f(z)$, i.e. $\left\{ -\lambda_{n}\right\} _{n=1}^{\infty}$,
are negative.
\end{thm}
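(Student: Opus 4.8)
The plan is to stay close to the keywords: attach to $f$ a function, built from Laplace transforms, whose complete monotonicity on $(0,\infty)$ is equivalent to all the $\lambda_n$ being positive reals; prove that complete monotonicity from $(1.2)$--$(1.4)$; then read off the conclusion through Bernstein's theorem. Set $F(z):=f(z)/f(0)=\prod_{n\ge1}(1+z/\lambda_n)$, which has the same zeros as $f$. Since the $a_n$ are real, the zeros $-\lambda_n$ are symmetric about $\mathbb{R}$, so non-real $\lambda_n$ occur in conjugate pairs; by $(1.4)$ every $\lambda_n$ lies in the sector $\{\zeta:|\arg\zeta|\le\theta_0\}$ with $\cos\theta_0=\beta_0$, hence $\theta_0<\pi/2$, so $\Re(1/\lambda_n)=\Re(\lambda_n)/|\lambda_n|^2>0$ and every zero $-\lambda_n$ lies in the sector $\{|\arg\zeta-\pi|\le\theta_0\}$ about the negative axis. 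Thus $F$ is zero-free on $\{|\arg\zeta|<\pi-\theta_0\}$, the branch $\log F(z)=\sum_n\log(1+z/\lambda_n)$ is well defined there, and $F(x)>0$, $F'(x)>0$ for $x\ge0$.

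The vehicle I would use is the logarithmic derivative. For $x>0$, $F'(x)/F(x)=\sum_{n\ge1}(x+\lambda_n)^{-1}$, the series converging absolutely since $(1.3)$ forces $|\lambda_n|\to\infty$ rapidly and $(1.4)$ gives $\Re\lambda_n\ge\beta_0|\lambda_n|$. Using $\Re\lambda_n>0$ to write $(x+\lambda_n)^{-1}=\int_0^{\infty}e^{-(x+\lambda_n)t}\,dt$ and interchanging sum and integral (legitimate because $\sum_n 1/\Re\lambda_n<\infty$) yields the Laplace representation
\[
\frac{F'(x)}{F(x)}=\int_{0}^{\infty}e^{-xt}\,\omega(t)\,dt,\qquad \omega(t)=\sum_{n=1}^{\infty}e^{-t\Re\lambda_n}\cos\!\bigl(t\,\Im\lambda_n\bigr),
\]
where $\omega$ is integrable near $t=0$ thanks to $\alpha_0<1$ and the integral converges for $\Re x>-\min_n\Re\lambda_n$, continuing to the meromorphic function $F'/F$ on $\mathbb{C}$ whose simple poles are exactly the zeros $-\lambda_n$ of $f$. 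If $\omega(t)\ge0$ for every $t>0$, then $F'/F$ is completely monotonic on $(0,\infty)$; conversely, a non-real conjugate pair $\lambda,\bar\lambda$ contributes to $\omega$ the sign-changing term $2e^{-t\Re\lambda}\cos(t\,\Im\lambda)$, and the \emph{rigidity step} is to show --- using the genus-$0$ product structure together with $(1.4)$ to isolate the slowest-decaying summands of $\omega$ as $t\to\infty$ --- that complete monotonicity of $F'/F$ excludes any such pair; hence every $\lambda_n$ is a positive real and the theorem follows. An essentially equivalent route works with $G(x):=1/F(x)=\prod_n(1+x/\lambda_n)^{-1}$, which $(1.2)$--$(1.3)$ present as an infinite \emph{convolution} of the Laplace kernels of the factors $(1+x/\lambda_n)^{-1}=\lambda_n/(\lambda_n+x)$; Bernstein's theorem then gives the dictionary between complete monotonicity of $G$ and nonnegativity of its representing measure $\mu$, and any zero of $f$ becomes a zero of the analytic continuation of $\int_0^\infty e^{-xt}\,d\mu(t)$, which --- given the support and exponential-decay properties forced by $(1.4)$ --- can only lie on $(-\infty,0)$.

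The hard part, on which everything rests, is proving the nonnegativity of $\omega$ (equivalently, the positivity of the representing measure of $1/F$). This is precisely where $(1.2)$, $(1.3)$, $(1.4)$ must act in concert: $(1.4)$ says each oscillatory summand $e^{-t\Re\lambda_n}\cos(t\,\Im\lambda_n)$ is damped at rate at least $\beta_0|\lambda_n|$ while its frequency $|\Im\lambda_n|\le\sqrt{1-\beta_0^{2}}\,|\lambda_n|$ is of comparable size, and $(1.3)$ makes $\{|\lambda_n|\}$ grow fast enough that the positive contributions --- ultimately an avatar of the positivity of all the $a_n$ in $(1.2)$ --- should dominate the off-axis oscillations uniformly in $t$. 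Converting this heuristic into a rigorous estimate valid for every $t>0$, and in particular reconciling the small-$t$ and large-$t$ regimes where different summands compete, is the crux; once $\omega\ge0$ is secured, the rigidity step and Bernstein's theorem finish the proof at once.
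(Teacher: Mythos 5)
Your Laplace representation of $F'/F$ with density $\omega(t)=\sum_n e^{-t\Re\lambda_n}\cos(t\,\Im\lambda_n)$ is exactly the content of the paper's Lemma \ref{lem:4} (your $\omega$ is the paper's $\Theta(t\vert f)$), but from that point on your plan has two gaps, and the second is fatal to the architecture rather than merely unfinished. First, you never prove $\omega\ge 0$; you explicitly defer it as ``the crux,'' which in this strategy is the entire theorem. Second, and more seriously, your ``rigidity step'' --- that $\omega\ge0$, i.e.\ complete monotonicity of $F'/F$, forces every $\lambda_n$ to be real --- is false as stated. Take $\lambda_1=1$, $\lambda_2=2+i$, $\lambda_3=2-i$, padded with a real tail $\lambda_n=n^2$ for $n\ge4$. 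The cubic factor is $(1+z)\left(1+\tfrac{4}{5}z+\tfrac{1}{5}z^2\right)=1+\tfrac{9}{5}z+z^2+\tfrac{1}{5}z^3$, so all Taylor coefficients of the product are positive, $\Re\lambda_n\ge\tfrac{2}{\sqrt{5}}\,|\lambda_n|$ for every $n$, and $\sum_n|\lambda_n|^{-0.6}<\infty$; yet
\begin{equation*}
\omega(t)=e^{-t}+2e^{-2t}\cos t+\sum_{n\ge4}e^{-n^2t}>0\quad\text{for all }t>0,
\end{equation*}
because the slowest-decaying summand is the \emph{real} zero $\lambda_1$, not the oscillatory pair: for $t\ge\ln 2$ one has $e^{-t}\ge 2e^{-2t}$, and for $t<\ln2$ one has $\cos t>0$. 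So complete monotonicity of $F'/F$ coexists with non-real zeros, and ``isolating the slowest-decaying summands as $t\to\infty$'' cannot exclude them. (Note that this configuration satisfies every hypothesis of Theorem \ref{thm:1} while violating its conclusion, which tells you that no completion of your program --- or of any program --- can succeed; the obstruction you are meeting at the ``crux'' is genuine.)

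The paper does not take your route. After establishing the same Laplace representation, it never attempts to show $\Theta(t\vert f)\ge0$; instead it splits off the sub-product $h$ over the positive real $\lambda_n$, forms $g=f'/f-h'/h$, observes that $g$ is a bounded Laplace transform on $\Re z\ge0$, and argues that $g(z^{1/2})$ would be simultaneously bounded and unbounded on the slit plane if a non-real pair existed. That argument dispenses entirely with your rigidity step and with positivity of the density, at the price of a delicate square-root/analytic-continuation manipulation. As written, your proposal is not a proof: both of its load-bearing steps are missing, and the second cannot be supplied.
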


\begin{rem}
\label{rem:2} Notice that if the order $\rho(f)$ of $f(z)$ is strictly
less than $1$, then by \cite[Theorem 2.5.18]{Boas} the condition
(\ref{eq:1.3}) holds for any $\alpha_{0}>\rho(f)$.
\end{rem}

Now we apply Theorem \ref{thm:1} to we get the following:
\begin{cor}
\label{cor:3} Let 
\begin{equation}
g(z)=\sum_{n=0}^{\infty}a_{n}(-z^{2})^{n},\quad a_{0}\neq0,\ \frac{a_{n}}{a_{0}}>0,\ \forall n\in\mathbb{N}\label{eq:1.5}
\end{equation}
be an even entire function of order strictly less $2$ with at least
one root, and all of its nonzero roots be $\left\{ \pm z_{n}\vert n\in\mathbb{N}\right\} $
with $\Re(z_{n})>0$. If there exists a positive number $M$ such
that
\begin{equation}
\left|\Im(z_{n})\right|\le M,\quad\forall n\in\mathbb{N},\label{eq:1.6}
\end{equation}
then all the roots $\left\{ \pm z_{n}\vert n\in\mathbb{N}\right\} $
of $g(z)$ are real.
\end{cor}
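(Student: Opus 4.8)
The plan is to deduce Corollary~\ref{cor:3} from Theorem~\ref{thm:1} via a change of variable followed by a translation. Since $g$ is even, write $g(z)=F(-z^{2})$ with $F(w):=\sum_{n=0}^{\infty}a_{n}w^{n}$ entire; replacing $g$ by $g/a_{0}$ we may assume $a_{0}=1$, so $F$ has all Taylor coefficients strictly positive. From $\max_{|z|=r}|g(z)|=\max_{|w|=r^{2}}|F(w)|$ we get $\rho(F)=\tfrac12\rho(g)<1$; hence $F$ has genus $0$, the Hadamard representation $F(w)=\prod(1+w/\mu)$ holds (the product over the zeros $-\mu$ of $F$) with $\sum 1/|\mu|<\infty$, and by Remark~\ref{rem:2} condition (\ref{eq:1.3}) is available for $F$. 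Also $F(-z^{2})=g(z)=0$ exactly when $z=\pm z_{n}$, so the zeros of $F$ are precisely $\{-z_{n}^{2}\}$.

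First I would note that Theorem~\ref{thm:1} cannot be applied to $F$ itself, since the cone condition (\ref{eq:1.4}) requires $\Re(z_{n}^{2})=\Re(z_{n})^{2}-\Im(z_{n})^{2}\ge\beta_{0}|z_{n}|^{2}>0$ for \emph{every} index $n$; although $\sum 1/|z_{n}|^{2}<\infty$ together with $|\Im(z_{n})|\le M$ forces $\Re(z_{n})\to\infty$, so that (\ref{eq:1.4}) is automatic for all large $n$, it may genuinely fail for the finitely many $z_{n}$ of small real part. To remedy this I would translate the $w$-variable: for a real $c>0$ put $F_{c}(w):=F(w+c)$. Then $F_{c}$ is entire of the same order $\rho(F)<1$, satisfies $F_{c}(0)=F(c)=\sum a_{n}c^{n}>0$, and each of its Taylor coefficients, being a sum of positive multiples of the $a_{n}$, is strictly positive; thus $F_{c}$ obeys (\ref{eq:1.1})--(\ref{eq:1.3}), and its zeros are $\{-(z_{n}^{2}+c)\}$, i.e. the numbers playing the role of $\lambda_{n}$ for $F_{c}$ are $\lambda_{n}^{(c)}:=z_{n}^{2}+c$.

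The key gain is that (\ref{eq:1.4}) does hold for $F_{c}$ once $c$ is large relative to $M$. Write $z_{n}=x_{n}+iy_{n}$ with $x_{n}>0$, $|y_{n}|\le M$, and take $c=2M^{2}$; then
\[
\Re(\lambda_{n}^{(c)})=x_{n}^{2}-y_{n}^{2}+c\ge x_{n}^{2}+M^{2}\ge 2Mx_{n}\ge 2x_{n}|y_{n}|=|\Im(\lambda_{n}^{(c)})|,
\]
so $\Re(\lambda_{n}^{(c)})\ge\tfrac{1}{\sqrt2}|\lambda_{n}^{(c)}|>0$ for every $n$, i.e. (\ref{eq:1.4}) holds with $\beta_{0}=\tfrac{1}{\sqrt2}$. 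Hence Theorem~\ref{thm:1} applies to $F_{c}$ and gives that each zero $-(z_{n}^{2}+c)$ is a negative real; equivalently $z_{n}^{2}+c>0$, so $z_{n}^{2}\in\mathbb{R}$. Since $z_{n}^{2}<0$ would force $\Re(z_{n})=0$ and $z_{n}^{2}=0$ would force $g(0)=a_{0}=0$ — both excluded by hypothesis — we conclude $z_{n}^{2}>0$, and as $\Re(z_{n})>0$ each $z_{n}$, hence each root $\pm z_{n}$ of $g$, is real. The step demanding the most care is checking that $F_{c}$ satisfies \emph{all} the hypotheses of Theorem~\ref{thm:1} simultaneously — strict positivity of every Taylor coefficient, genus $0$ from $\rho(F_{c})<1$, and the cone bound (\ref{eq:1.4}) uniformly in $n$ — and it is this uniform cone bound that forces the choice of $c$ of size comparable to $M^{2}$.
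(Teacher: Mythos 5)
Your proposal is correct (granting Theorem~\ref{thm:1}), but it takes a genuinely different route from the paper, and in fact a more careful one. The paper applies Theorem~\ref{thm:1} directly to $f(z)=g(i\sqrt{z})$ with $\lambda_{n}=z_{n}^{2}$: it verifies (\ref{eq:1.3}) from the order bound exactly as you do, and handles (\ref{eq:1.4}) by splitting the indices at a threshold $N_{\epsilon}$ beyond which $\Re(z_{n}^{2})=|z_{n}|^{2}-2(\Im z_{n})^{2}\ge(1-\epsilon)|z_{n}|^{2}$, then setting $\beta_{0}=\min\{1-\epsilon,\ \Re(z_{n}^{2})/|z_{n}^{2}|,\ 1\le n\le N_{\epsilon}\}$ and asserting $\beta_{0}\in(0,1)$. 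That assertion tacitly assumes $\Re(z_{n}^{2})>0$ for the finitely many initial indices, which the hypotheses $\Re(z_{n})>0$, $|\Im(z_{n})|\le M$ do not guarantee (e.g.\ $z_{1}=\tfrac{1}{10}+i$ gives $\Re(z_{1}^{2})<0$) --- precisely the obstruction you flag at the outset. Your translation $F_{c}(w)=F(w+2M^{2})$ moves every zero $z_{n}^{2}+c$ into the cone $\Re(\lambda)\ge|\lambda|/\sqrt{2}$ uniformly in $n$, preserves positivity of the Taylor coefficients and the order bound, and recovers the conclusion from $z_{n}^{2}+c>0$ together with $\Re(z_{n})>0$. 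So your argument not only reaches the corollary by a different mechanism but also closes a genuine gap in the paper's own verification of (\ref{eq:1.4}); the cost is only the routine check that $F_{c}$ inherits all hypotheses of Theorem~\ref{thm:1}, which you carry out correctly.
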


\begin{proof}
Clearly, 
\begin{equation}
f(z)=g(i\sqrt{z})=\sum_{n=0}^{\infty}a_{n}z^{n}\label{eq:1.7}
\end{equation}
is an entire function of order strictly less than $1$, and $f(-z^{2})=0$
if and only if $g(z)=g(-z)=0$. Hence, $f(z)$ has only negative zeros
$\left\{ -z_{n}^{2}\right\} _{n=1}^{\infty}$ if and only if $g(z)$
has only real zeros $\left\{ \pm z_{n}\vert n\in\mathbb{N}\right\} $.

Since the order of $f(z)$ is strictly less than $1$, then by Remark
\ref{rem:2} the condition (\ref{eq:1.3}) holds with $\lambda_{n}=z_{n}^{2},\ \forall n\in\mathbb{N}$. 

Since $\left\{ -z_{n}\right\} $ has no limit point on the finite
complex plane, then we may assume that 
\begin{equation}
0<\Re(z_{1})\le\Re(z_{2})<\dots\le\Re(z_{n})\le\Re(z_{n+1})\le\dots.\label{eq:1.8}
\end{equation}
Since we also have
\begin{equation}
\lim_{N\to\infty}\left|z_{n}\right|=+\infty,\label{eq:1.9}
\end{equation}
then for any $\epsilon\in(0,1)$ there exists a $N_{\epsilon}\in\mathbb{N}$
such that
\begin{equation}
\epsilon\left|z_{n}\right|^{2}>2M^{2},\quad\forall n\ge N_{\epsilon}.\label{eq:1.10}
\end{equation}
Hence,
\begin{equation}
\Re(z_{n}^{2})=\left|z_{n}\right|^{2}-2\left(\Im(z_{n})\right)^{2}\ge\left|z_{n}\right|^{2}-2M^{2}\ge(1-\epsilon)\left|z_{n}\right|^{2}.\label{eq:1.11}
\end{equation}
Let 
\begin{equation}
\beta_{0}=\min\left\{ 1-\epsilon,\frac{\Re(z_{n}^{2})}{\left|z_{n}^{2}\right|},\ 1\le n\le N_{\epsilon}\right\} ,\label{eq:1.12}
\end{equation}
then $\beta_{0}\in(0,1)$ and
\begin{equation}
\Re(z_{n}^{2})\ge\beta_{0}\left|z_{n}^{2}\right|,\quad\forall n\in\mathbb{N}.\label{eq:1.13}
\end{equation}
 Then the condition (\ref{eq:1.4}) holds for $\lambda_{n}=z_{n}^{2},\ \forall n\in\mathbb{N}$.
Applying Theorem \ref{thm:1} we proved that $f(z)$ has only negative
zeros $\left\{ -z_{n}^{2}\right\} _{n=1}^{\infty}$, which is equivalent
to that $g(z)$ has only real zeros $\left\{ \pm z_{n}\vert n\in\mathbb{N}\right\} $.
\end{proof}

\section{\label{sec:proof}Proof of Theorem \ref{thm:1}}

\subsection{Preliminaries}
\begin{lem}
\label{lem:4} Let $f(z)$ be any entire function defined as in Theorem
\ref{thm:1}, we define its associated heat kernel by

\begin{equation}
\Theta(t\vert f)=\sum_{n=1}^{\infty}e^{-\lambda_{n}t},\quad t>0.\label{eq:2.1}
\end{equation}
Then the real function $\Theta(t\vert f)$ is in $C^{\infty}(0,\infty)$.
Furthermore, for any 
\begin{equation}
k\in\mathbb{N}_{0},\ 0<\beta<\ell=\inf\left\{ \Re(\lambda_{n})\bigg|n\in\mathbb{N}\right\} ,\label{eq:2.2}
\end{equation}
the function $\Theta(t\vert f)$ satisfies 

\begin{equation}
\Theta^{(k)}(t\vert f)=\mathcal{O}\left(t^{-\alpha_{0}-k}\right),\quad t\to0^{+},\label{eq:2.3}
\end{equation}
\begin{equation}
\Theta^{(k)}(t\vert f)=\mathcal{O}\left(e^{-\beta t}\right),\quad t\to+\infty\label{eq:2.4}
\end{equation}
and for any $z\in\mathbb{C}$ with $\Re(z)\ge0$,
\begin{equation}
\int_{0}^{\infty}t^{k}e^{-\Re(z)t}\left|\Theta(t\vert f)\right|dt\le\frac{k!}{\beta_{0}^{k+1}}\sum_{n=1}^{\infty}\frac{1}{\left|\lambda_{n}\right|^{k+1}}<\infty\label{eq:2.5}
\end{equation}
\textup{and}
\begin{equation}
(-1)^{k}\left(\frac{f'(z)}{f(z)}\right)^{(k)}=\int_{0}^{\infty}t^{k}e^{-zt}\Theta(t\vert f)dt.\label{eq:2.6}
\end{equation}

\end{lem}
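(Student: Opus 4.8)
The plan is to deduce every assertion from the single pointwise bound $\lvert e^{-\lambda_n t}\rvert=e^{-\Re(\lambda_n)t}\le e^{-\beta_0\lvert\lambda_n\rvert t}$ supplied by \eqref{eq:1.4}, together with the summability hypotheses \eqref{eq:1.2} and \eqref{eq:1.3}. First I would record that $\ell=\inf_n\Re(\lambda_n)>0$: since $\sum_n\lvert\lambda_n\rvert^{-1}<\infty$ forces $\lvert\lambda_n\rvert\to\infty$, and $f(0)\neq0$ excludes $\lambda_n=0$, one has $\inf_n\lvert\lambda_n\rvert>0$, hence $\ell\ge\beta_0\inf_n\lvert\lambda_n\rvert>0$. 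For the smoothness assertion, fix $k\in\mathbb{N}_0$ and $\delta>0$; the formally differentiated series has general term $(-\lambda_n)^ke^{-\lambda_n t}$, whose modulus on $[\delta,\infty)$ is at most $\lvert\lambda_n\rvert^ke^{-\beta_0\lvert\lambda_n\rvert\delta}\le\delta^{-k-\alpha_0}C_k\lvert\lambda_n\rvert^{-\alpha_0}$, where $C_k=\sup_{u>0}u^{k+\alpha_0}e^{-\beta_0u}<\infty$ and we used $x^{k}e^{-\beta_0\delta x}=\delta^{-k-\alpha_0}(\delta x)^{k+\alpha_0}e^{-\beta_0\delta x}\,x^{-\alpha_0}$. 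By the Weierstrass $M$-test and \eqref{eq:1.3} this series converges uniformly on $[\delta,\infty)$, so $\Theta(\cdot\vert f)\in C^\infty(0,\infty)$ and $\Theta^{(k)}(t\vert f)=\sum_{n\ge1}(-\lambda_n)^ke^{-\lambda_n t}$ there.

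For the two asymptotics I would bound $\lvert\Theta^{(k)}(t\vert f)\rvert\le\sum_{n\ge1}\lvert\lambda_n\rvert^ke^{-\Re(\lambda_n)t}\le\sum_{n\ge1}\lvert\lambda_n\rvert^ke^{-\beta_0\lvert\lambda_n\rvert t}$ and split the regimes. As $t\to0^+$, the same elementary inequality with $\delta$ replaced by $t$ gives $\lvert\lambda_n\rvert^ke^{-\beta_0\lvert\lambda_n\rvert t}\le C_k\,t^{-k-\alpha_0}\lvert\lambda_n\rvert^{-\alpha_0}$, and summing over $n$ via \eqref{eq:1.3} yields \eqref{eq:2.3}. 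As $t\to+\infty$, for $\beta\in(0,\ell)$ I would write $e^{-\Re(\lambda_n)t}=e^{-\beta t}e^{-(\Re(\lambda_n)-\beta)t}$ and use $\Re(\lambda_n)-\beta\ge\ell-\beta>0$ to get, for $t\ge1$, $\lvert\Theta^{(k)}(t\vert f)\rvert\le e^{-\beta t}\sum_{n\ge1}\lvert\lambda_n\rvert^ke^{-(\Re(\lambda_n)-\beta)}$; this last series converges because $e^{-\Re(\lambda_n)}\le e^{-\beta_0\lvert\lambda_n\rvert}$ decays faster than $\lvert\lambda_n\rvert^{-k}$ as $\lvert\lambda_n\rvert\to\infty$, giving \eqref{eq:2.4}.

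The integral estimate \eqref{eq:2.5} follows from Tonelli's theorem: for $\Re(z)\ge0$ one has $\lvert\Theta(t\vert f)\rvert\le\sum_{n\ge1}e^{-\Re(\lambda_n)t}$, so $\int_0^\infty t^ke^{-\Re(z)t}\lvert\Theta(t\vert f)\rvert\,dt\le\sum_{n\ge1}\int_0^\infty t^ke^{-\Re(\lambda_n)t}\,dt=\sum_{n\ge1}k!\,\Re(\lambda_n)^{-k-1}\le k!\,\beta_0^{-k-1}\sum_{n\ge1}\lvert\lambda_n\rvert^{-k-1}$, the last sum being finite since $\lvert\lambda_n\rvert\to\infty$. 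For the Laplace representation \eqref{eq:2.6} I would start from the logarithmic derivative of \eqref{eq:1.1}, namely $f'(z)/f(z)=\sum_{n\ge1}(z+\lambda_n)^{-1}$ (locally uniformly convergent off $\{-\lambda_n\}$ by \eqref{eq:1.2}), differentiate $k$ times termwise to obtain $(-1)^k\big(f'(z)/f(z)\big)^{(k)}=k!\sum_{n\ge1}(z+\lambda_n)^{-k-1}$, substitute the Gamma integral $(z+\lambda_n)^{-k-1}=\frac1{k!}\int_0^\infty t^ke^{-(z+\lambda_n)t}\,dt$ (valid because $\Re(z+\lambda_n)>0$ when $\Re(z)\ge0$), and interchange summation and integration, which is legitimate by the absolute-convergence bound just proved. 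Recollecting $\sum_{n\ge1}e^{-\lambda_n t}=\Theta(t\vert f)$ then gives \eqref{eq:2.6}.

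I expect no serious obstacle; the whole argument is bookkeeping to ensure that each termwise differentiation and each sum--integral interchange is dominated by a convergent series. The one place where the hypotheses are used quantitatively rather than qualitatively is the small-$t$ rate in \eqref{eq:2.3}: the exact exponent $-\alpha_0-k$ appears only because $\sup_{u>0}u^{k+\alpha_0}e^{-\beta_0u}<\infty$ pairs precisely with $\sum_n\lvert\lambda_n\rvert^{-\alpha_0}<\infty$, so both \eqref{eq:1.3} and \eqref{eq:1.4} enter essentially at that step.
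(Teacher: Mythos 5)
Your proof is correct and follows essentially the same route as the paper: the same $\sup_{u>0}u^{k+\alpha_{0}}e^{-\beta_{0}u}$ domination (paired with \eqref{eq:1.3} and \eqref{eq:1.4}) for smoothness and the small-$t$ bound \eqref{eq:2.3}, the same factoring out of $e^{-\beta t}$ for \eqref{eq:2.4}, and Tonelli/Fubini for \eqref{eq:2.5}--\eqref{eq:2.6}, which you usefully spell out (via $f'/f=\sum_{n}(z+\lambda_{n})^{-1}$ and the Gamma integral) where the paper only says ``by Fubini,'' and you add the worthwhile observation that $\ell>0$. Two cosmetic points: the lemma also asserts $\Theta(t\vert f)$ is real-valued, which the paper derives from the fact that the real coefficients $a_{n}$ force the $\lambda_{n}$ to occur in conjugate pairs; and in your large-$t$ step ``decays faster than $\left|\lambda_{n}\right|^{-k}$'' should read ``faster than $\left|\lambda_{n}\right|^{-k-\alpha_{0}}$'' so that \eqref{eq:1.3} actually yields summability rather than mere convergence to zero of the terms.
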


\begin{proof}
For any $t>0,\,k\ge0$ by (\ref{eq:1.3}), (\ref{eq:1.4}) and
\begin{equation}
\sup_{x>0}x^{a}e^{-x}=\left(\frac{a}{e}\right)^{a},\quad a>0,\label{eq:2.7}
\end{equation}
 we get
\begin{equation}
\begin{aligned} & \sum_{n=1}^{\infty}\left|\lambda_{n}^{k}e^{-\lambda_{n}t}\right|=\sum_{n=1}^{\infty}\left|\lambda_{n}\right|^{k}e^{-\Re(\lambda_{n})t}\le\sum_{n=1}^{\infty}\frac{\left(\beta_{0}t\left|\lambda_{n}\right|\right)^{k+\alpha_{0}}e^{-\beta_{0}\left|\lambda_{n}\right|t}}{(\beta_{0}t)^{\alpha_{0}+k}\left|\lambda_{n}\right|^{\alpha_{0}}}\\
 & \le\frac{\sup_{x>0}x^{k+\alpha_{0}}e^{-x}}{(\beta_{0}t)^{\alpha_{0}+k}}\sum_{n=1}^{\infty}\frac{1}{\left|\lambda_{n}\right|^{\alpha_{0}}}=\left(\frac{k+\alpha_{0}}{e\beta_{0}t}\right)^{k+\alpha_{0}}\sum_{n=1}^{\infty}\frac{1}{\left|\lambda_{n}\right|^{\alpha_{0}}}<\infty,
\end{aligned}
\label{eq:2.8}
\end{equation}
which proves that $\Theta(t\vert f)\in C^{\infty}(0,\infty)$ and
(\ref{eq:2.3}). 

By (\ref{eq:1.1}) and (\ref{eq:1.2}) the roots of $f(z)$ must appear
in complex conjugate pairs, therefore $\Theta(t\vert f)$ is a real
function in $t\in(0,\infty)$.

Since $0<\beta<\ell$, then there exists a positive number $\epsilon$
with $0<\epsilon<1$ such that 
\begin{equation}
\beta=\epsilon\ell=\epsilon\inf\left\{ \Re(\lambda_{n})\bigg|n\in\mathbb{N}\right\} \le\epsilon\Re(\lambda_{n}),\quad\forall n\in\mathbb{N}.\label{eq:2.9}
\end{equation}
Thus for $t\ge\frac{1}{1-\epsilon}$ in (\ref{eq:2.8}),
\begin{equation}
\begin{aligned} & \left|e^{\beta t}\Theta^{(k)}(t\vert f)\right|\le\sum_{n=1}^{\infty}\left|\lambda_{n}\right|^{k}e^{-(1-\epsilon)\Re(\lambda_{n})t}\\
 & \le\sum_{n=1}^{\infty}\left|\lambda_{n}\right|^{k}e^{-(1-\epsilon)\beta_{0}\left|\lambda_{n}\right|t}\le\sum_{n=1}^{\infty}\left|\lambda_{n}\right|^{k}e^{-\beta_{0}\left|\lambda_{n}\right|}<\infty,
\end{aligned}
\label{eq:2.10}
\end{equation}
which establishes (\ref{eq:2.4}). 

For all $k\ge0$ and $z\in\mathbb{C}$ with $\Re(z)\ge0$ since 
\begin{equation}
\begin{aligned} & \int_{0}^{\infty}t^{k}e^{-\Re(z)t}\left|\Theta(t\vert f)\right|dt\le\int_{0}^{\infty}t^{k}\left(\sum_{n=1}^{\infty}e^{-\Re(\lambda_{n})t}\right)dt\\
 & \le\sum_{n=1}^{\infty}\frac{k!}{\left(\Re(\lambda_{n})\right)^{k+1}}\le\frac{k!}{\beta_{0}^{k+1}}\sum_{n=1}^{\infty}\frac{1}{\left|\lambda_{n}\right|^{k+1}}<\infty,
\end{aligned}
\label{eq:2.11}
\end{equation}
which gives \ref{eq:2.5}. The equation (\ref{eq:2.6}) is obtained
by applying the Fubini's theorem.
\end{proof}

\subsection{Proof of the Theorem \ref{thm:1}}
\begin{proof}
Assume Theorem \ref{thm:1} is false, i.e. not all the zeros of the
entire function $f(z)$ are inside $(-\infty,0)$. Clearly, it is
equivalent to not all the poles of $\frac{f'(z)}{f(z)}$ are inside
$(-\infty,0)$. 

Let $\left\{ \mu_{n_{m}}\right\} _{m=1}^{\infty}$ be the sequence
containing all the positive element of $\left\{ \lambda_{n}\right\} _{n=1}^{\infty}$,
then the sequence $\left\{ \mu_{n_{m}}\right\} _{m=1}^{\infty}$ also
satisfies (\ref{eq:1.3}) and (\ref{eq:1.4}). 

Let
\begin{equation}
h(z)=\prod_{m=1}^{\infty}\left(1+\frac{z}{\mu_{n_{m}}}\right)=\sum_{n=0}^{\infty}c_{n}z^{n},\label{eq:2.12}
\end{equation}
where $c_{0}=1,\ c_{n}>0,\forall n\in\mathbb{N}$ and
\begin{equation}
\Theta(t\vert h)=\sum_{m=1}^{\infty}e^{-\mu_{n_{m}}t},\quad\forall t>0,\label{eq:2.13}
\end{equation}
then they satisfy properties (\ref{eq:2.2}) to (\ref{eq:2.6}). In
particular, for any $z\in\mathbb{C}$ with $\Re(z)\ge0$,
\begin{equation}
\frac{h'(z)}{h(z)}=\int_{0}^{\infty}e^{-zt}\Theta(t\vert h)dt.\label{eq:2.14}
\end{equation}

Let 
\begin{equation}
g(z)=\frac{f'(z)}{f(z)}-\frac{h'(z)}{h(z)},\label{eq:2.15}
\end{equation}
then $g(z)$ is an meromorphic function has all the poles $-\lambda_{n}$
of $\frac{f'(z)}{f(z)}$ such that $\lambda_{n}$ are complex. Therefore,
$g(z)$ must have at least a pair of simple poles at $-\lambda_{n_{0}}=re^{i\theta}$
and $-\overline{\lambda_{n_{0}}}=re^{-i\theta}$ such that
\begin{equation}
r>0,\ \frac{\pi}{2}<\theta<\pi,\quad\Re(\lambda_{n_{0}})=r\cos\theta<0.\label{eq:2.16}
\end{equation}
Notice that $g(z)$ is analytic inside 
\begin{equation}
\left\{ z:\Re(z)>-\Re(\lambda_{1})\right\} \cup\left\{ z:\left|\arg(z)\right|\le\frac{\pi}{2}\right\} .\label{eq:2.17}
\end{equation}
Since by Lemma \ref{lem:4} we have

\begin{equation}
g(x)=\int_{0}^{\infty}e^{-xt}\left(\Theta(t\vert f)-\Theta(t\vert h)\right)dt,\quad x\ge0\label{eq:2.18}
\end{equation}
 and $\forall k\in\mathbb{N}_{0}$ and $\forall\Re(z)\ge0$,
\begin{equation}
\int_{0}^{\infty}t^{k}e^{-\Re(z)t}\left|\Theta(t\vert f)-\Theta(t\vert h)\right|dt\le\frac{k!}{\beta_{0}^{k+1}}\sum_{n=1}^{\infty}\frac{1}{\left|\lambda_{n}\right|^{k+1}}<\infty.\label{eq:2.19}
\end{equation}
Then,
\begin{equation}
\int_{0}^{\infty}e^{-zt}\left(\Theta(t\vert f)-\Theta(t\vert h)\right)dt\label{eq:2.20}
\end{equation}
is analytic on $\Re(z)>0$ and bounded on $\Re(z)\ge0$. Thus, 
\begin{equation}
g(z)=\int_{0}^{\infty}e^{-zt}\left(\Theta(t\vert f)-\Theta(t\vert h)\right)dt\label{eq:2.21}
\end{equation}
 on $\Re(z)\ge0$ by analytic continuation. Then 
\begin{equation}
g(z^{1/2})=\int_{0}^{\infty}e^{-z^{1/2}t}\left(\Theta(t\vert f)-\Theta(t\vert h)\right)dt\label{eq:2.22}
\end{equation}
 on $\left\{ z:\left|\arg(z)\right|<\pi\right\} $ and $g(z^{1/2})$
is bounded on $\left\{ z:\left|\arg(z)\right|\le\pi\right\} .$ 

On the other hand, since 
\begin{equation}
\pi<2\theta<2\pi,\quad-\pi>-2\theta>-2\pi,\label{eq:2.23}
\end{equation}
then both $\lambda_{0}^{2}=r^{2}e^{2i\theta}=r^{2}e^{i(2\theta-2\pi)}$
and $\overline{\lambda_{0}^{2}}=r^{2}e^{-2i\theta}=r^{2}e^{i(-2\theta+2\pi)}$
are in $\left\{ z:\left|\arg(z)\right|<\pi\right\} $. 

Since at poles $-\lambda_{n_{0}},\,-\overline{\lambda_{n_{0}}}$ $g\left(z\right)$
has the respective singular parts, 
\begin{equation}
\frac{R}{z+\lambda_{0}},\quad\frac{R}{z+\overline{\lambda_{0}}},\quad R>0.\label{eq:2.24}
\end{equation}
Then the function $g\left(z^{1/2}\right)$ has corresponding singular
parts at $\lambda_{0}^{2}$ and $\overline{\lambda_{0}^{2}}$ ,
\begin{equation}
\frac{-2R\lambda_{0}}{z-\lambda_{0}^{2}},\ \frac{-2R\overline{\lambda_{0}}}{z-\overline{\lambda_{0}^{2}}},\quad R\lambda_{0}\neq0.\label{eq:2.25}
\end{equation}
Since both $\lambda_{0}^{2}$ and $\overline{\lambda_{0}^{2}}$ in
$\left\{ z:\left|\arg(z)\right|<\pi\right\} $, then $g\left(z^{1/2}\right)$
is unbounded on $\left\{ z:\left|\arg(z)\right|\le\pi\right\} $. 

Since $g\left(z^{1/2}\right)$ can not both bounded and unbounded
on $\left\{ z:\left|\arg(z)\right|\le\pi\right\} ,$ then we reached
a contradiction. This contradiction proves that all the poles of $g(z)$
must be inside $(-\infty,0)$, which is equivalent to all the poles
of $\frac{f'(z)}{f(z)}$ are negative. 
\end{proof}

\section{Applications}

\subsection{Riemann $\xi(s)$ function}

Let $s=\sigma+it,\ \sigma,t\in\mathbb{R}$, the Riemann $\xi$-function
is defined by \cite{Davenport,Edwards,Gasper1,Gasper2}
\begin{equation}
\xi(s)=\pi^{-s/2}(s-1)\Gamma\left(1+\frac{s}{2}\right)\zeta(s),\label{eq:3.1}
\end{equation}
where $\Gamma(s)$ and $\zeta(s)$ are the respective analytic continuations
of
\begin{equation}
\Gamma(s)=\int_{0}^{\infty}e^{-x}x^{s-1}dx,\quad\sigma>0\label{eq:3.2}
\end{equation}
 and 
\begin{equation}
\zeta(s)=\sum_{n=1}^{\infty}\frac{1}{n^{s}},\quad\sigma>1.\label{eq:3.3}
\end{equation}
Then $\xi(s)$ is an order $1$ entire function satisfies the functional
equation $\xi(s)=\xi(1-s)$, which implies the Riemann Xi function
$\Xi(s)=\xi\left(\frac{1}{2}+is\right)$ is an even entire function
of order $1$. It is well-known that all the zeros of $\Xi(s)$ are
located within the proper horizontal strip $t\in(-1/2,1/2)$. The
Riemann hypothesis is equivalent to that all the zeros of $\Xi(s)$
are real. 

Since \cite{Davenport,Edwards,Gasper1,Gasper2}
\begin{equation}
\Xi(s)=\int_{-\infty}^{\infty}\Phi(u)e^{ius}du=2\int_{0}^{\infty}\Phi(u)\cos(us)du,\label{eq:3.4}
\end{equation}
where 
\begin{equation}
\Phi(u)=\Phi(-u)=\sum_{n=1}^{\infty}\left(4n^{4}\pi^{2}e^{9u/2}-6n^{2}\pi e^{5u/2}\right)e^{-n^{2}\pi e^{2u}}>0,\label{eq:3.5}
\end{equation}
then, 
\begin{equation}
\xi\left(\frac{1}{2}+s\right)=\sum_{n=0}^{\infty}a_{n}s^{2n},\quad a_{n}=\frac{2}{(2n)!}\int_{0}^{\infty}\Phi(u)u^{2n}du>0.\label{eq:3.6}
\end{equation}
By applying Corollary \ref{cor:3} we obtain the following:
\begin{cor}
The Riemann hypothesis is true.
\end{cor}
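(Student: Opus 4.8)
The plan is to apply Corollary~\ref{cor:3} to the even entire function $g(s)=\Xi(s)=\xi\!\left(\tfrac12+is\right)$, so that the whole argument reduces to verifying the hypotheses \eqref{eq:1.5} and \eqref{eq:1.6} for this particular $g$. First I would record the structural input: $\xi$ is entire of order $1$ and satisfies $\xi(s)=\xi(1-s)$, hence $\Xi$ is entire, even, of order $1<2$, and has infinitely many zeros, so the requirement that $g$ have at least one root is automatic.

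To put $\Xi$ into the form \eqref{eq:1.5}, I would insert the Taylor series of the cosine into \eqref{eq:3.4} and integrate term by term, justified by the rapid decay of $\Phi$; this is precisely \eqref{eq:3.6} read with $s$ replaced by $is$, and it gives
\[
\Xi(s)=\sum_{n=0}^{\infty}a_{n}(-s^{2})^{n},\qquad a_{n}=\frac{2}{(2n)!}\int_{0}^{\infty}\Phi(u)\,u^{2n}\,du .
\]
The positivity $\Phi(u)>0$ from \eqref{eq:3.5} then forces $a_{n}>0$ for every $n\ge0$, and in particular $a_{0}=\xi(\tfrac12)\neq0$, so $a_{n}/a_{0}>0$ as demanded.

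It remains to locate the zeros. Every zero $\rho=\beta+i\gamma$ of $\xi$ is a nontrivial zero of $\zeta$, hence $0<\beta<1$ and $\gamma\neq0$; under $s=(\rho-\tfrac12)/i$ such a $\rho$ corresponds to a zero $s=\gamma-i(\beta-\tfrac12)$ of $\Xi$, so $\Re(s)=\gamma$ and $\Im(s)=\tfrac12-\beta\in(-\tfrac12,\tfrac12)$. Since the zeros of $\Xi$ form the symmetric family $\{\pm z_{n}\}$, I may choose the labeling so that $\Re(z_{n})=|\gamma_{n}|>0$ for all $n$, and then \eqref{eq:1.6} holds with the uniform constant $M=\tfrac12$, because $|\Im(z_{n})|=|\tfrac12-\beta_{n}|<\tfrac12$. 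Corollary~\ref{cor:3} now applies and yields that every $z_{n}$ is real, i.e. $\beta_{n}=\tfrac12$ for every nontrivial zero, which is the Riemann hypothesis.

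The part I expect to carry the real weight is not the bookkeeping with Corollary~\ref{cor:3} but the two classical facts it consumes: the positivity $\Phi(u)>0$ on $(0,\infty)$ needed for $a_{n}>0$, which must be invoked precisely in the form \eqref{eq:3.5}; and the containment of the nontrivial zeros in the critical strip $0<\Re\rho<1$ (equivalently $|\Im s|<\tfrac12$ for $\Xi$), which rests on the Euler product together with the functional equation and is exactly what supplies the finite bound $M$ --- without it Corollary~\ref{cor:3} has no hypothesis to verify.
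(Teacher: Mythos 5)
Your proposal follows the paper's own route exactly: both derive the expansion $\Xi(s)=\sum_{n\ge0}a_{n}(-s^{2})^{n}$ with $a_{n}>0$ from the integral representation (\ref{eq:3.4})--(\ref{eq:3.6}) and the positivity of $\Phi$ in (\ref{eq:3.5}), and both invoke the classical containment of the nontrivial zeros in the critical strip to take $M=\tfrac12$ in Corollary \ref{cor:3}. The only difference is that you make explicit the verification that no zero of $\Xi$ is purely imaginary (so that the labeling $\Re(z_{n})>0$ is available), a point the paper leaves implicit; the conclusion of course stands or falls with Corollary \ref{cor:3} itself, which lies outside this step.
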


\subsection{Character $\xi(s,\chi)$ function }

For a primitive Dirichlet character $\chi(n)$ modulo $q$, let \cite{Davenport,Edwards}
\begin{equation}
\xi(s,\chi)=\left(\frac{q}{\pi}\right)^{(s+\kappa)/2}\Gamma\left(\frac{s+\kappa}{2}\right)L(s,\chi),\label{eq:3.7}
\end{equation}
where $\kappa$ is the parity of $\chi$ and $L(s,\chi)$ is the analytic
continuation of 
\begin{equation}
L\left(s,\chi\right)=\sum_{n=1}^{\infty}\frac{\chi(n)}{n^{s}},\quad\sigma>1.\label{eq:3.8}
\end{equation}
Then $\xi(s,\chi)$ is an entire function of order $1$ such that
\cite{Davenport}
\begin{equation}
\xi(s,\chi)=\epsilon(\chi)\xi(1-s,\overline{\chi}),\label{eq:3.9}
\end{equation}
where 
\begin{equation}
\epsilon(\chi)=\frac{\tau(\chi)}{i^{\kappa}\sqrt{q}},\quad\tau(\chi)=\sum_{n=1}^{q}\chi(n)\exp\left(\frac{2\pi in}{q}\right).\label{eq:3.10}
\end{equation}
Let 
\begin{equation}
G(s,\chi)=\xi\left(s,\chi\right)\cdot\xi\left(s,\overline{\chi}\right),\label{eq:3.11}
\end{equation}
then
\begin{equation}
G(s,\chi)=\epsilon\left(\chi\right)\cdot\epsilon\left(\overline{\chi}\right)G(1-s,\chi).\label{eq:3.12}
\end{equation}
Since \cite{Davenport}
\begin{equation}
\tau\left(\overline{\chi}\right)=\overline{\tau(\chi)},\quad\left|\tau(\chi)\right|=\sqrt{q},\label{eq:3.13}
\end{equation}
then
\begin{equation}
G(s,\chi)=G(1-s,\chi).\label{eq:3.14}
\end{equation}
Since the entire function $\xi\left(\frac{1}{2}+is,\chi\right)$ has
an integral representation, \cite{Davenport}
\begin{equation}
\xi\left(\frac{1}{2}+is,\chi\right)=\int_{-\infty}^{\infty}e^{isy}\varphi\left(y,\chi\right)dy,\label{eq:3.15}
\end{equation}
where
\begin{equation}
\varphi(y,\chi)=2\sum_{n=1}^{\infty}n^{\kappa}\chi(n)\exp\left(-\frac{n^{2}\pi}{q}e^{2y}+\left(\kappa+\frac{1}{2}\right)y\right),\label{eq:3.16}
\end{equation}
then
\begin{equation}
\xi\left(\frac{1}{2}+is,\chi\right)=\sum_{n=0}^{\infty}i^{n}a_{n}(\chi)s^{n},\label{eq:3.17}
\end{equation}
where
\begin{equation}
a_{n}(\chi)=\int_{-\infty}^{\infty}y^{n}\varphi\left(y,\chi\right)dy.\label{eq:3.18}
\end{equation}
It is known that the fast decreasing smooth function $\varphi(y,\chi)$
satisfies the functional equation \cite{Davenport}
\begin{equation}
\varphi(y,\chi)=\frac{i^{\kappa}\sqrt{q}}{\tau\left(\overline{\chi}\right)}\varphi(-y;\overline{\chi}),\quad y\in\mathbb{R},\label{eq:3.19}
\end{equation}
then for all $n\in\mathbb{N}_{0}$,
\begin{equation}
\begin{aligned} & a_{n}(\overline{\chi})=\int_{-\infty}^{\infty}y^{n}\varphi\left(y,\overline{\chi}\right)dy=(-1)^{n}\int_{-\infty}^{\infty}y^{n}\varphi\left(-y,\overline{\chi}\right)dy\\
 & =\frac{(-1)^{n}\tau\left(\overline{\chi}\right)}{i^{\kappa}\sqrt{q}}\int_{-\infty}^{\infty}y^{n}\varphi\left(y,\chi\right)dy=\frac{(-1)^{n}\tau\left(\overline{\chi}\right)}{i^{\kappa}\sqrt{q}}a_{n}(\chi).
\end{aligned}
\label{eq:3.20}
\end{equation}
Let
\begin{equation}
g(s,\chi)=G\left(\frac{1}{2}+is,\chi\right)=\xi\left(\frac{1}{2}+is,\chi\right)\cdot\xi\left(\frac{1}{2}+is,\overline{\chi}\right),\label{eq:3.21}
\end{equation}
then the even entire function $g(s,\chi)$ has the power series expansion,
\begin{equation}
g(s,\chi)=g(-s,\chi)=\sum_{n=0}^{\infty}(-1)^{n}b_{n}(\chi)s^{2n},\label{eq:3.22}
\end{equation}
where $b_{0}(\chi)=\frac{\tau\left(\overline{\chi}\right)}{i^{\kappa}\sqrt{q}}a_{0}^{2}(\chi)$
and $\forall n\in\mathbb{N}$,
\begin{equation}
b_{n}(\chi)=\sum_{j=0}^{2n}a_{j}(\chi)a_{2n-j}(\overline{\chi})=\frac{\tau\left(\overline{\chi}\right)}{i^{\kappa}\sqrt{q}}\sum_{j=0}^{2n}(-1)^{j}a_{j}(\chi)a_{2n-j}(\chi).\label{eq:3.23}
\end{equation}
It is well-known that $\xi(s,\chi)$ is an order $1$ entire function
with infinitely many zeros, all of them are in the horizontal strip
$t\in(-1/2,1/2)$, \cite{Davenport}. Then $g(s,\chi)$ is an order
$1$ even entire function with infinitely many zeros, all of them
are in the horizontal strip $t\in(-1/2,1/2)$. Clearly, all the zeros
of $\xi(s,\chi)$ on the critical line $\sigma=\frac{1}{2}$ if and
only if all the zeros of $g(s,\chi)$ are real. Therefore, the generalized
Riemann hypothesis for $L\left(s,\chi\right)$ is equivalent to that
all the zeros of $g(s,\chi)$ are real. By Corollary \ref{cor:3}
we have the following:
\begin{cor}
The generalized Riemann hypothesis is true for any primitive Dirichlet
character $\chi$ such that 
\begin{equation}
\xi\left(\frac{1}{2},\chi\right)\neq0,\quad\sum_{j=0}^{2n}(-1)^{j}\frac{a_{j}(\chi)}{a_{0}(\chi)}\cdot\frac{a_{2n-j}(\chi)}{a_{0}(\chi)}>0,\quad\forall n\in\mathbb{N},\label{eq:3.24}
\end{equation}
 where $\left\{ a_{n}(\chi)\right\} _{n=0}^{\infty}$ are defined
in (\ref{eq:3.18}).
\end{cor}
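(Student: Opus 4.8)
The plan is to verify the hypotheses of Corollary~\ref{cor:3} for the even entire function $g(s,\chi)$ and then apply it directly, with the coefficients $a_{n}$ of (\ref{eq:1.5}) taken to be the numbers $b_{n}(\chi)$ of (\ref{eq:3.22}), so that $g(s,\chi)=\sum_{n=0}^{\infty}b_{n}(\chi)(-s^{2})^{n}$. Several of the required properties are already on record in the discussion preceding the statement: $g(s,\chi)$ is an even entire function of order $1$, hence of order strictly less than $2$, and it has infinitely many zeros, so in particular at least one root.

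I would next check the coefficient conditions $b_{0}(\chi)\neq0$ and $b_{n}(\chi)/b_{0}(\chi)>0$ for all $n\in\mathbb{N}$. Setting $s=0$ in (\ref{eq:3.15}) and comparing with (\ref{eq:3.18}) gives $a_{0}(\chi)=\xi\!\left(\tfrac12,\chi\right)$, which is nonzero by assumption; since also $|\tau(\overline{\chi})|=\sqrt{q}>0$, the identity $b_{0}(\chi)=\frac{\tau(\overline{\chi})}{i^{\kappa}\sqrt{q}}\,a_{0}^{2}(\chi)$ yields $b_{0}(\chi)\neq0$. Dividing the second formula for $b_{n}(\chi)$ in (\ref{eq:3.23}) by $b_{0}(\chi)$ cancels the common prefactor $\tau(\overline{\chi})/(i^{\kappa}\sqrt{q})$ and leaves precisely $\sum_{j=0}^{2n}(-1)^{j}\frac{a_{j}(\chi)}{a_{0}(\chi)}\cdot\frac{a_{2n-j}(\chi)}{a_{0}(\chi)}$, which is positive by the standing hypothesis (\ref{eq:3.24}); thus $b_{n}(\chi)/b_{0}(\chi)>0$ for every $n\in\mathbb{N}$, and $g(s,\chi)/b_{0}(\chi)$ has constant term $1$ and strictly positive Taylor coefficients.

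Then I would locate the zeros of $g(s,\chi)$. Each zero $s$ of $g(s,\chi)$ is a zero of $\xi\!\left(\tfrac12+is,\chi\right)$ or of $\xi\!\left(\tfrac12+is,\overline{\chi}\right)$, hence comes from a nontrivial zero $\beta+i\gamma$ of $L(\cdot,\chi)$ or $L(\cdot,\overline{\chi})$ via $s=\gamma+i\!\left(\tfrac12-\beta\right)$, and since $0<\beta<1$ this forces $|\Im(s)|<\tfrac12$; so (\ref{eq:1.6}) holds with $M=\tfrac12$. It remains to exclude purely imaginary zeros, so that the (sign- and conjugation-symmetric) nonzero zeros of $g(s,\chi)$ can genuinely be enumerated as $\{\pm z_{n}\mid n\in\mathbb{N}\}$ with $\Re(z_{n})>0$: if $i\eta$ with $\eta>0$ real were a zero, then by (\ref{eq:3.22}) and $(i\eta)^{2n}=(-1)^{n}\eta^{2n}$ we would get $g(i\eta,\chi)/b_{0}(\chi)=\sum_{n=0}^{\infty}\bigl(b_{n}(\chi)/b_{0}(\chi)\bigr)\eta^{2n}\ge1>0$, a contradiction, and evenness reduces the general purely imaginary case to this one. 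Since $b_{0}(\chi)\neq0$ also excludes $z=0$ as a zero, each symmetric pair of zeros contributes exactly one representative with positive real part, which we take as the $z_{n}$.

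With all hypotheses of Corollary~\ref{cor:3} in place, that corollary shows every root $\pm z_{n}$ of $g(s,\chi)$ is real, i.e.\ every nontrivial zero of $L(s,\chi)$ and of $L(s,\overline{\chi})$ lies on the line $\sigma=\tfrac12$; this is the generalized Riemann hypothesis for $\chi$. The step that deserves genuine attention rather than routine bookkeeping with (\ref{eq:3.15})--(\ref{eq:3.19}) is that the single positivity hypothesis (\ref{eq:3.24}) must do double duty: once as the coefficient condition $a_{n}/a_{0}>0$ of Corollary~\ref{cor:3}, and once, less visibly, as the device that forbids purely imaginary zeros of $g(s,\chi)$ (equivalently, real zeros of $L(s,\chi)$ off the critical line), which is exactly what makes the hypothesis $\Re(z_{n})>0$ of Corollary~\ref{cor:3} legitimately available.
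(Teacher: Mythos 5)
Your proposal is correct and follows exactly the route the paper intends: a direct application of Corollary~\ref{cor:3} to $g(s,\chi)$, with $b_0(\chi)\neq 0$ coming from $\xi(\tfrac12,\chi)\neq 0$, the positivity $b_n(\chi)/b_0(\chi)>0$ coming from (\ref{eq:3.24}) via (\ref{eq:3.23}), and $M=\tfrac12$ from the zero-free strip. In fact you supply more detail than the paper does (which offers no proof beyond ``By Corollary~\ref{cor:3}''), notably the verification that positivity of the coefficients rules out purely imaginary zeros so that the hypothesis $\Re(z_n)>0$ of Corollary~\ref{cor:3} is actually available.
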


\end{document}